\documentclass[11pt]{article}

\title{An elementary alternative to ECH capacities}
\author{Michael Hutchings\footnote{Partially supported by NSF grant DMS-2005437.}}
\date{}

\addtolength{\oddsidemargin}{-.25in}
\addtolength{\evensidemargin}{-.25in}
\addtolength{\textwidth}{0.5in}
\addtolength{\topmargin}{-.25in}
\addtolength{\textheight}{0.5in}

\usepackage{amssymb}
\usepackage{latexsym}
\usepackage{amsmath}
\usepackage{amsthm}
\usepackage{amscd}
\usepackage{color}

\newcommand{\mc}[1]{{\mathcal #1}}

\numberwithin{equation}{section}

\newtheorem{theorem}{Theorem}[section]

\newtheorem{lemma}[theorem]{Lemma}
\newtheorem{lemma-definition}[theorem]{Lemma-Definition}

\theoremstyle{definition}
\newtheorem{definition}[theorem]{Definition}

\newtheorem{remark}[theorem]{Remark}

\newtheorem{example}[theorem]{Example}

\renewcommand{\frak}{\mathfrak}

\newcommand{\C}{{\mathbb C}}

\newcommand{\R}{{\mathbb R}}

\newcommand{\Z}{{\mathbb Z}}

\newcommand{\op}{\operatorname}

\newcommand{\M}{\mc{M}}

\newcommand{\Ker}{\op{Ker}}

\newcommand{\bpm}{\begin{pmatrix}}
\newcommand{\epm}{\end{pmatrix}}

\renewcommand{\epsilon}{\varepsilon}

\begin{document}

\maketitle

\begin{abstract}
The ECH capacities are a sequence of numerical invariants of symplectic four-manifolds which give (sometimes sharp) obstructions to symplectic embeddings. These capacities are defined using embedded contact homology, and establishing their basic properties currently requires Seiberg-Witten theory.   In this note we define a new sequence of symplectic capacities in four dimensions using only basic notions of holomorphic curves. The new capacities satisfy the same basic properties as ECH capacities and agree with the ECH capacities for the main examples for which the latter have been computed, namely convex and concave toric domains. The new capacities are also useful for obstructing symplectic embeddings into closed symplectic four-manifolds. This work is inspired by a recent preprint of McDuff-Siegel giving a similar elementary alternative to symplectic capacities from rational SFT.
\end{abstract}

\tableofcontents

\setcounter{tocdepth}{2}

\section{Introduction}
\label{sec:intro}

We define a {\em symplectic capacity\/} to be a function $c$ which maps some set of symplectic manifolds (possibly noncompact, disconnected, and/or with boundary or corners) to $[0,\infty]$. We assume the following two properties:
\begin{description}
\item{(Monotonicity)}
If $(X,\omega)$ and $(X',\omega)$ are symplectic manifolds of the same dimension for which $c$ is defined, and if there exists a symplectic embedding $\varphi:(X,\omega)\to (X',\omega')$, then $c(X,\omega)\le c(X',\omega')$.
\item{(Conformality)}
If $r>0$ then $c(X,r\omega) = rc(X,\omega)$.
\end{description}
Various symplectic capacities are used to study symplectic embedding problems. In particular, symplectic capacities give obstructions to symplectic embeddings via the Monotonicity property, because under the hypotheses of this property, if $c(X,\omega) > c(X',\omega')$, then a symplectic embedding $(X,\omega)\to (X',\omega')$ cannot exist. See e.g.\ \cite{qsg} for a survey of symplectic capacities.

Perhaps the most basic example of a symplectic capacity is the {\em Gromov width\/} $c_{\op{Gr}}$.  For $a>0$, define the ball
\[
B^{2n}(a) = \{z\in\C^n\mid \pi|z|^2\le a\}
\]
with the restriction of the standard symplectic form $\sum_{i=1}^n dx_i\, dy_i$ on $\C^n=\R^{2n}$. If $\op{dim}(X)=2n$, then $c_{\op{Gr}}(X,\omega)$ is defined to be the supremum over $a$ such that there exists a symplectic embedding $B^{2n}(a)\to(X,\omega)$. The celebrated Gromov nonsqueezing theorem \cite{nonsqueezing} is equivalent to the statement that the cylinder
\[
Z^{2n}(a) = \{z\in\C^n \mid \pi|z_1|^2\le a\}
\]
has Gromov width equal to $a$.

While the Gromov width has a very simple definition, it is difficult to use by itself for studying symplectic embedding problems, since it is defined in terms of symplectic embeddings. In general, there is a gap we would like to bridge between (1) symplectic capacities with simple geometric definitions that can be hard to compute, such as the Gromov width; and (2) symplectic capacities defined using Floer-theoretic or related machinery which are more computable, but whose definition requires substantial technical work.

One example of the latter type of capacity is the sequence of Ekeland-Hofer capacities defined using variational methods in \cite{eh}, or the conjecturally equivalent capacities defined in \cite{gh} using positive $S^1$-equivariant symplectic homology.

Another example, which is the focus of the present paper, is the sequence of ECH capacities introduced in \cite{qech}; see the expositions in \cite{pnas,bn} and \S\ref{sec:compare} below. Let $(X,\omega)$ be a symplectic four-manifold, not necessarily closed or connected. The ECH capacities of $(X,\omega)$ are a sequence of real numbers
\[
0 = c_0^{\op{ECH}}(X,\omega) < c_1^{\op{ECH}}(X,\omega)\le c_2^{\op{ECH}}(X,\omega) \le \cdots \le +\infty.
\]
Monotonicity of ECH capacities means that if $(X',\omega')$ is another symplectic four-manifold, and if there exists a symplectic embedding $(X,\omega)\to(X',\omega')$, then
\begin{equation}
\label{eqn:obstruction}
c_k^{\op{ECH}}(X,\omega) \le c_k^{\op{ECH}}(X',\omega')
\end{equation}
for all $k$. This obstruction is known to be sharp in some cases. For example, McDuff \cite{mcd} showed that if $X$ and $X'$ are open ellipsoids in $\R^4$ with the restriction of the standard symplectic form, then there exists a symplectic embedding $X\to X'$ if and only if $c_k^{\op{ECH}}(X)\le c_k^{\op{ECH}}(X')$ for all $k$. More generally, Cristofaro-Gardiner \cite{concaveconvex} showed that this sharpness result extends to the case when $X$ is an open ``concave toric domain'', and $X'$ is a ``convex toric domain'', in $\R^4$; see the definitions below. The ECH capacities are defined using embedded contact homology \cite{bn}, and the proof of the symplectic embedding obstruction in \eqref{eqn:obstruction} uses cobordism maps on embedded contact homology, which currently need to be defined using Seiberg-Witten theory\footnote{Heuristically one might expect to define such a cobordism map just by counting holomorphic curves. Although this is possible in some special cases \cite{gerig,rooney}, in general there are severe transversality difficulties with multiply covered curves; see \cite[\S5.5]{bn} for explanation.}.

More recently, Siegel \cite{siegel} used rational symplectic field theory (SFT) \cite{egh} to define a set of symplectic capacities which are well suited to studying stabilized symplectic embedding problems. These capacities are not yet rigorously defined because the technical foundations of rational SFT are still a work in progress. However McDuff-Siegel \cite{ms} showed that the key applications of Siegel's capacities can be proved rigorously, using a replacement of some of Siegel's capacities by an alternate set of capacities with a more elementary definition directly in terms of holomorphic curves with local tangency constraints.

More generally, one can hope that capacities extracted from Floer theories can be understood geometrically without passing through Floer theory, or at least can be replaced by more elementary capacities with the same applications. Roughly speaking, following the idea of the McDuff-Siegel capacities, the elementary capacities that we have in mind are answers to versions of the following question: {\em What is the minimal energy for which holomorphic curves satisfying certain conditions are guaranteed to exist?\/}

The purpose of this note is to pursue this direction for the ECH capacities. Namely we give an elementary definition of a sequence of symplectic capacities for symplectic four-manifolds, which we denote by $c_k$, which are defined directly in terms of holomorphic curves constrained to pass through $k$ points. We show that the capacities $c_k$ have the same basic properties as ECH capacities and agree with them in important examples. In particular, this allows some of the applications of ECH capacities to be re-proved without using Seiberg-Witten theory. The capacities $c_k$ also give good obstructions to symplectic embeddings into some closed symplectic four-manifolds with $b_2^+=1$ such as $\C P^2$ or $S^2\times S^2$, whose ECH capacities are not known. At the end, we define an even simpler sequence of capacities $\widehat{c}_k$ in any dimension, which conjecturally agree with the capacities $c_k$ in the main four-dimensional cases.

\paragraph{Acknowledgments.} Thanks to Grisha Mikhalkin and Kyler Siegel for helpful discussion and comments on an initial draft of this note. Thanks to Brayan Ferreira for pointing out the example in Remark~\ref{rem:brayan}. Thanks to the anonymous referees for their careful reading and corrections.


\section{Definition of the capacities $c_k$}
\label{sec:definition}

We begin by recalling some basic definitions.

Let $Y$ be a three-manifold and let $\lambda$ be a contact form on $Y$. Let $\xi=\Ker(\lambda)$ denote the associated contact structure, and let $R$ denote the associated Reeb vector field. Define an {\em orbit set\/} to be a finite set of pairs $\alpha=\{(\alpha_i,m_i)\}$ where the $\alpha_i$ are distinct simple Reeb orbits, and the $m_i$ are positive integers. Define the {\em symplectic action\/} of the orbit set $\alpha$ by
\[
\mc{A}(\alpha) = \sum_im_i\int_{\alpha_i}\lambda.
\]
The contact form $\lambda$ is {\em nondegenerate\/} if every Reeb orbit (simple or multiply covered) is nondegenerate, i.e.\ the linearized return map does not have $1$ as an eigenvalue.

We say that an almost complex structure $J$ on $\R\times Y$ is {\em $\lambda$-compatible\/} if $J\partial_s=R$, where $s$ denotes the $\R$ coordinate; $J$ sends the contact structure $\xi$ to itself, rotating positively in the sense that $d\lambda(v,Jv)>0$ for every nonzero $v\in\xi$; and $J$ is $\R$-invariant.

We define a four-dimensional {\em Liouville domain\/} to be a compact symplectic four-manifold $(X,\omega)$ with boundary $Y$ such that there exists a primitive of $\omega$ which restricts to a contact form $\lambda$ on $Y$, for which the contact orientation of $Y$ agrees with the boundary orientation of $\partial X$. A basic example is a star-shaped domain in $\R^4$. Here a ``star-shaped domain'' is a compact domain in $\R^{2n}$ with smooth boundary which is transverse to the radial vector field, with the restriction of the standard symplectic form. We say that the Liouville domain $(X,\omega)$ is {\em nondegenerate\/} if the contact form $\lambda$ on $Y$ is nondegenerate; this notion does not depend on the choice of primitive of $\omega$.

Given a Liouville domain as above, and given $\epsilon>0$, a choice of primitive of $\omega$ determines a neighborhood $N_\epsilon$ of $Y$ in $X$, and an identification
\begin{equation}
\label{eqn:nepsilon}
N_\epsilon \simeq (-\epsilon,0]\times Y,
\end{equation}
under which $\omega|_{N_\epsilon}$ is identified with $d(e^s\lambda)$, where $s$ denotes the $(-\epsilon,0]$ coordinate. Using this identification, we can glue to obtain a smooth manifold
\begin{equation}
\label{eqn:completion}
\overline{X} = X\cup_Y ([0,\infty)\times Y),
\end{equation}
which we call the ``symplectization completion'' of $X$. This has a symplectic form $\overline{\omega}$ which agrees with $\omega$ on $X$ and with $d(e^s\lambda)$ on $[0,\infty)\times Y$. Strictly speaking, this completion depends on the choice of primitive of $\omega$, which we suppress from the notation.

We say that an almost complex structure $J$ on $\overline{X}$ is {\em cobordism-compatible\/} if $J|_X$ is $\omega$-compatible, and if $J|_{[0,\infty)\times Y}$ is the restriction of a $\lambda$-compatible almost complex structure on $\R\times Y$.

Define an {\em admissible symplectic four-manifold\/} to be a (possibly disconnected) compact symplectic four-manifold $(X,\omega)$ such that each component is either closed or a nondegenerate Liouville domain. Define $\overline{X}$ to be the union of the closed components and the symplectization completions of the Liouvile domain components. Define $\mc{J}(\overline{X},\omega)$ to be the set of almost complex structures on $\overline{X}$ which are $\omega$-compatible on the closed components and cobordism-compatible on the completed Liouville domain components.

Let $J\in\mc{J}(\overline{X},\omega)$. We consider holomorphic maps
\[
u: (\Sigma,j) \longrightarrow (\overline{X},J)
\]
where $\Sigma$ is a punctured compact Riemann surface (possibly disconnected), such that for each puncture in $\Sigma$, there is a Reeb orbit $\gamma$ on $\partial X$ and a neighborhood of the puncture mapping asymptotically to $[0,\infty)\times\gamma$ as $s\to\infty$. To avoid trivialities we assume that the restriction of $u$ to each component of the domain $\Sigma$ is nonconstant. Let $\M^J(\overline{X})$ denote the set of $J$-holomorphic maps as above, modulo reparametrization by biholomorphic maps $(\Sigma',j')\stackrel{\simeq}{\to}(\Sigma,j)$. If $x_1,\ldots,x_k\in X$ are distinct points, let $\M^J(\overline{X};x_1,\ldots,x_k)$ denote the set of $u\in\M^J(\overline{X})$ such that $x_1,\ldots,x_k\in u(\Sigma)$.

Define the {\em energy\/} $\mc{E}(u)$ as follows. If $\Sigma$ is connected and $u$ maps to a closed component of $X$, then $\mc{E}(u)=\int_\Sigma u^*\omega$. If $\Sigma$ is connected and $u$ maps to a completed Liouville domain component, then $\mc{E}(u)$ is the sum over the punctures of $\Sigma$ of the symplectic actions of the corresponding Reeb orbits. If $\Sigma$ is disconnected, then $\mc{E}(u)$ is the sum of the energies of the connected components.

\begin{definition}
\label{def:main}
Let $(X,\omega)$ be an admissible symplectic four-manifold and let $k$ be a nonnegative integer. Define
\begin{equation}
\label{eqn:maindef}
c_k(X,\omega) = 
\sup_{\substack{J\in\mc{J}(\overline{X})\\ \mbox{\scriptsize $x_1,\ldots,x_k\in X$ distinct}}} \inf_{u\in\mc{M}^J(\overline{X};x_1,\ldots,x_k)} \mc{E}(u) \in [0,\infty].
\end{equation}
\end{definition}

\begin{remark}
\label{rem:mc}
A key observation, which avoids various technical difficulties, is that in \eqref{eqn:maindef}, we can restrict attention to holomorphic curves $u$ that {\em do not have any multiply covered components\/}\footnote{We say that $u:\Sigma\to\overline{X}$ ``has no multiply covered components'' if the restriction of the map $u$ to each component of the domain $\Sigma$ is not multiply covered (which means that it must be somewhere injective), and no two components of $\Sigma$ have the same image under $u$.}.  This is because we can always replace a multiply covered component by the underlying somewhere injective curve to reduce energy without invalidating the point constraints.
\end{remark}

\begin{lemma}
\label{lem:monotonicity}
(proved in \S\ref{sec:monotonicity})
Let $(X,\omega)$ and $(X',\omega')$ be admissible symplectic four-manifolds and let $k$ be a nonnegative integer. If there exists a symplectic embedding $\varphi:(X,\omega)\to (X',\omega')$, then
\[
c_k(X,\omega) \le c_k(X',\omega').
\]
\end{lemma}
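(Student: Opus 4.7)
The strategy is a neck-stretching argument using SFT compactness. Fix any $J_0 \in \mc{J}(\overline{X},\omega)$ and any distinct points $x_1, \ldots, x_k \in \op{int}(X)$ (a small perturbation handles the boundary case), and set $E \eqdef \inf_{u \in \mc{M}^{J_0}(\overline{X};\, x_1, \ldots, x_k)} \mc{E}(u)$. It suffices to show $c_k(X',\omega') \ge E$. After a small Liouville rescaling of $(X,\omega)$, one may also assume $\varphi(X) \subset \op{int}(X')$, so that each $\varphi(\partial X_j)$ for a Liouville component $X_j \subset X$ is a contact-type hypersurface lying strictly inside $X'$. Any closed components of $X$ embed onto unions of closed components of $X'$ and are handled by the argument below with no neck present.

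For each $T > 0$, build $J'_T \in \mc{J}(\overline{X'},\omega')$ stretched along $\varphi(\partial X)$: using the Liouville flow in $X'$, identify a small collar of $\varphi(\partial X_j)$ with the symplectization slab $(-\epsilon,\epsilon)\times \partial X_j$, on which $\omega' = d(e^s\lambda_j)$. Insert a cylinder of length $2T$ in the middle, producing a symplectic manifold diffeomorphic to $\overline{X'}$ with a long neck, and choose $J'_T$ to equal $\varphi_*J_0$ on $\varphi(X_j)$ outside the collar, to be $\lambda_j$-compatible on the neck, and to agree with some fixed cobordism-compatible structure on the completion of $X' \setminus \varphi(X)$. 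Put $x'_i \eqdef \varphi(x_i) \in \op{int}(\varphi(X))$; these sit in a compact region that is fixed as $T$ varies.

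Suppose for contradiction that $c_k(X',\omega') < E$. Then for each large $T$ one can choose $u'_T \in \mc{M}^{J'_T}(\overline{X'};\, x'_i)$ with $\mc{E}(u'_T) \le c_k(X',\omega') + 1/T < E$, and by Remark~\ref{rem:mc} I may assume each $u'_T$ has no multiply covered components. Pick $T_n \to \infty$. The uniformly bounded energy together with the SFT compactness theorem for neck-stretched families (Bourgeois--Eliashberg--Hofer--Wysocki--Zehnder; this uses nondegeneracy of the contact form on $\partial X$) yields, after passing to a subsequence, convergence of $u'_{T_n}$ to a holomorphic building $U$ with $\mc{E}(U) = \lim_n \mc{E}(u'_{T_n}) \le c_k(X',\omega') < E$. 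The levels of $U$ lie in $\overline{X}$, in the symplectizations $\R \times \partial X_j$, and in the completion of $X' \setminus \varphi(X)$. Each $x'_i$ remains pinned in $\op{int}(\varphi(X))$ and therefore lies in the image of the inner level $u^X$ of $U$. After discarding ghost components and replacing any multiply covered components of $u^X$ by their underlying simple curves (as in Remark~\ref{rem:mc}), $u^X$ becomes a legitimate element of $\mc{M}^{J_0}(\overline{X};\, x_1, \ldots, x_k)$, so $\mc{E}(u^X) \ge E$. Applying Stokes' theorem level-by-level --- with $\int u^* d\lambda_j \ge 0$ on each symplectization level and $\int u^* \omega' \ge 0$ on the outer cobordism level --- telescopes to $\mc{E}(U) \ge \mc{E}(u^X) \ge E$, contradicting the bound $\mc{E}(U) < E$.

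The main obstacle is the SFT compactness step and the verification that each point constraint $x'_i$ survives to the inner level of the limit building. Intuitively this is clear because the $x'_i$ are pinned in $\op{int}(\varphi(X))$, well away from the neck where breaking happens, and the almost complex structures $J'_T$ coincide on a fixed neighborhood of the $x'_i$ as $T$ varies; rigorously it uses $C^\infty$ convergence on compact subsets of the domain disjoint from the nodes of the limit building, together with the fact that the constraint points cannot escape to infinity in any cylindrical end.
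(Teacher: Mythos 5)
Your overall strategy (reduce to $\varphi(X)\subset\op{int}(X')$ by the Liouville flow, stretch the neck along $\varphi(\partial X)$, extract a limit curve in $\overline{X}$ through the constraint points, and compare energies) is the same as the paper's. However, there is a genuine gap at the compactness step. You invoke the SFT compactness theorem of Bourgeois--Eliashberg--Hofer--Wysocki--Zehnder for the neck-stretched family $u'_{T_n}$, but that theorem requires a uniform bound on the topology (in particular the genus) of the domains, and no such bound is available here: the moduli spaces $\M^J(\overline{X'};x'_1,\ldots,x'_k)$ in Definition~\ref{def:main} contain curves of arbitrary genus, so a sequence of curves with energy within $1/T_n$ of the infimum may have genus tending to infinity. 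An energy bound alone does not control the genus of punctured curves in a completion, so the hypothesis of SFT compactness is simply not verified, and the limit building $U$ you work with is not known to exist.

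The paper gets around exactly this issue in two steps. First, it replaces SFT compactness by a local Gromov compactness theorem for holomorphic curves viewed as \emph{currents} (Taubes \cite[Prop.~3.3]{taubesgc}, Doan--Walpuski \cite[Prop.~1.9]{dw}), which needs only the energy bound and no control on the genus; this produces a proper holomorphic limit in $\overline{X}$ through $x_1,\ldots,x_k$ with the right energy, but whose components could a priori have infinite genus. Second, it establishes the genus bound a posteriori: since the contact forms are nondegenerate there are only finitely many orbit sets of bounded action, so after passing to a subsequence one may fix the asymptotic orbit set $\alpha'$ and the relative homology class $[u_{R_i}]$; then, for curves without multiply covered components (Remark~\ref{rem:mc}) and at most $k$ components, the relative adjunction formula and the asymptotic writhe bound of \cite{ir} give a lower bound on the Euler characteristic of the domain depending only on $\alpha'$ and the relative class. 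This is an essentially four-dimensional argument and is the real technical content of the lemma; your proof needs either this step or some other a priori genus bound before any version of SFT compactness can be applied. The remainder of your argument (the points being pinned away from the neck, discarding multiple covers, and the Stokes/action bookkeeping showing the inner level has energy at most that of the original curves) is consistent with the paper's.
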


To extend the definition of $c_k$ to more general symplectic four-manifolds, we use the following basic procedure; compare \cite[\S4.2]{qech}.

\begin{definition}
\label{def:sup}
Let $(X',\omega')$ be any symplectic four-manifold  (possibly noncompact, disconnected, and/or with boundary or corners) and let $k$ be a nonnegative integer. Define
\[
c_k(X',\omega') = \sup\{c_k(X,\omega)\}
\]
where the supremum is over admissible symplectic four-manifolds $(X,\omega)$ for which there exists a symplectic embedding $\varphi:(X,\omega)\to (X',\omega')$.
\end{definition}

It follows from Lemma~\ref{lem:monotonicity} that Definition~\ref{def:sup} agrees with Definition~\ref{def:main} when $(X',\omega')$ is already an admissible symplectic four-manifold.

\begin{remark}
\label{rem:ms}
The definition of $c_k$ is inspired by the paper of McDuff-Siegel \cite{ms}, which gives a similar elementary definition of a sequence of symplectic capacities $\widetilde{\frak{g}}_k$, as an alternative to symplectic capacities that were defined in \cite{siegel} using rational SFT \cite{egh}. The capacities $\widetilde{\frak{g}}_k$ are defined for symplectic manifolds of any dimension using genus zero holomorphic curves that are constrained to have contact of order $k$ with a local divisor.

Some variants of Definition~\ref{def:main} are possible. For example one could require each component of the domain of $u$ to have genus zero; the resulting capacities may be related to the capacities $\widetilde{\frak{g}}_k$.
\end{remark}


\section{Proof of the monotonicity lemma}
\label{sec:monotonicity}

Before discussing the basic properties of the capacities $c_k$ in \S\ref{sec:properties} below, we now prove Lemma~\ref{lem:monotonicity}.

The following notation will be useful. Let $(X,\omega)$ be an admissible symplectic four-manifold and let $\alpha=\{(\alpha_i,m_i)\}$ be an orbit set for $Y=\partial X$. Let $H_2(X,\alpha)$ denote the set of relative homology classes of $2$-chains $Z$ in $X$ with $\partial Z = \alpha$. This set is an affine space over $H_2(X)$.

Given $J\in\mc{J}(\overline{X},\omega)$, let $\M^J(\overline{X},\alpha;x_1,\ldots,x_k)$ denote the set of holomorphic curves in $\M^J(\overline{X};x_1,\ldots,x_k)$ such that for each $i$, there are punctures asymptotic to covers of $\alpha_i$ with total multiplicity $m_i$, and there are no other punctures.
Note that each $u\in\M^J(\overline{X},\alpha;x_1,\ldots,x_k)$ has a well-defined relative homology class $[u]\in H_2(X,\alpha)$.

\begin{proof}[Proof of Lemma~\ref{lem:monotonicity}.]
For $\epsilon>0$, let $N_\epsilon$ denote the neighborhood of $\partial X$ in \eqref{eqn:nepsilon}. The time $\epsilon$ flow of the Liouville vector field (coming from the primitive of $\omega$) defines a symplectomorphism
\begin{equation}
\label{eqn:shrink}
(X\setminus N_\epsilon, \omega|_{X\setminus N_\epsilon}) \simeq (X,e^{-\epsilon}\omega).
\end{equation}
It follows from Definition~\ref{def:main} that $c_k$ satisfies the Conformality property, so we deduce from \eqref{eqn:shrink} that
\begin{equation}
\label{eqn:shrinkingcapacity}
c_k(X\setminus N_\epsilon, \omega|_{X\setminus N_\epsilon}) = e^{-\epsilon}c_k(X,\omega)
\end{equation}
Consequently, by replacing $X$ with $X\setminus N_\epsilon$ for $\epsilon>0$ small if necessary, we can assume without loss of generality that $\varphi(X)\subset \op{int}(X')$.

Now fix $x_1,\ldots,x_k\in X$ distinct, $J\in\mc{J}(\overline{X},\omega)$, and $\epsilon>0$. To prove the lemma, we need to show that there exists $u\in\M^J(\overline{X};x_1,\ldots,x_k)$ with
\[
\mc{E}(u) < c_k(X',\omega') + \epsilon.
\]
We will use a ``neck stretching'' argument.

Write $Y=\partial X$ and let $\lambda$ denote the contact form on $Y$. Since $\varphi(X)\subset\op{int}(X')$, there exists a neighborhood $\mc{U}$ of $\varphi(Y)$ in $X'\setminus\varphi(\op{int}(X))$ and an identification
\[
(\mc{U},\omega'|_\mc{U}) \simeq ([0,\delta)\times Y,d(e^s\lambda))
\]
for some $\delta>0$, where $s$ denotes the $[0,\delta)$ coordinate. For each $R>0$, we can choose an almost complex structure $J_R\in\mc{J}(\overline{X'},\omega')$ such that $\varphi$ extends to a biholomorphism
\begin{equation}
\label{eqn:bih}
\varphi_R: (X\cup_Y([0,R)\times Y),J) \stackrel{\simeq}{\longrightarrow} (\varphi(X)\cup\mc{U},J_R).
\end{equation}
We can further assume that $J_R$ is independent of $R$ outside of $\varphi(X)\cup\mc{U}$.

By the definition of $c_k$, for each $R$ we can choose
\[
u_R\in\M^{J_R}(\overline{X'};\varphi(x_1),\ldots,\varphi(x_k))
\]
with
\begin{equation}
\label{eqn:energybound}
\mc{E}(u_R) < c_k(X',\omega') + \epsilon.
\end{equation}
Let $u_R^\varphi$ denote the intersection of the curve $u_R$ with $\varphi(X)\cup\mc{U}$, composed with $\varphi_R^{-1}$.
We now want to argue that there is a sequence $R_i\to\infty$ such that the intersections of the curves $u_{R_i}^\varphi$ converge in some sense to the desired curve $u$. This task is complicated by the fact that we do not have an a priori bound on the genus of the components of the domains of the curves $u_R$, so we cannot directly use SFT compactness as in \cite{behwz,cm}.

Fortunately, there is a local version of Gromov compactness using currents which does not require any genus bound. This was proved in the four-dimensional case by Taubes \cite[Prop.\ 3.3]{taubesgc}, and an updated version which works in arbitrary dimension was proved by Doan-Wapulski \cite[Prop.\ 1.9]{dw}. By this local Gromov compactness and the energy bound \eqref{eqn:energybound}, as applied in \cite[\S9.4]{pfh2}, we can find a sequence $R_i\to\infty$ such that the curves $u_{R_i}^\varphi$ converge as currents to a proper holomorphic map $u$ to $\overline{X}$ which passes through the points $x_1,\ldots,x_k$, is asymptotic as a current as $s\to\infty$ to an orbit set for $Y$, and has energy less than $c_k(X',\omega') + \epsilon$. A priori, components of the domain of $u$ may have infinite genus, and to complete the proof of the lemma we need to arrange that they are punctured compact Riemann surfaces.

We can choose the sequence $R_i$ so that there is a single orbit set $\alpha'$ for $\partial X'$ such that each $u_{R_i}$ is in $\mc{M}^{J_{R_i}}(\overline{X'},\alpha';\varphi(x_1),\ldots,\varphi(x_k))$, because there are only finitely many orbit sets for $\partial X'$ with action less than $c_k(X',\omega') + \epsilon$. When applying Gromov compactness above, we can further use the arguments in \cite[\S9.4]{pfh2} to chase down the rest of the energy of the holomorphic curves $u_{R_i}$ and pass to a subsequence such that the relative homology class $[u_{R_i}]\in H_2(X',\alpha')$ does not depend on $i$.

By Remark~\ref{rem:mc}, we can assume that each $u_{R_i}$ has no multiply covered components. Since we are in four dimensions, the relative adjunction formula of \cite[Prop.\ 4.9]{ir} and the asymptotic writhe bound of \cite[Lem.\ 4.20]{ir} imply that there is a lower bound on the Euler characteristic of the domain of $u_{R_i}$ depending only on the orbit set $\alpha'$ and the relative homology class $[u_{R_i}]$. We can also assume that the domain of each $u_{R_i}$ has at most $k$ components, since otherwise some components can be discarded without violating the requirement to pass through the points $x_1,\ldots,x_k$. Consequently we obtain an $i$-independent upper bound on the genus of each component of the domain of $u_{R_i}$.

We can then pass to a subsequence such that the components of the domain of $u_{R_i}$ can be numbered so that the $j^{th}$ component is a punctured compact Riemann surface with the genus and number of punctures not depending on $i$, and the sequence of $j^{th}$ components with the restrictions of the maps $u_{R_i}$ converges as $i\to\infty$ to a component of $u$ whose domain is also a punctured compact Riemann surface.
\end{proof}


\section{Properties of the capacities $c_k$}
\label{sec:properties}

\begin{theorem}
\label{thm:properties}
The capacities $c_k$ of four-dimensional symplectic manifolds have the following properties:
\begin{description}
\item{(Conformality)}
If $r>0$ then
\begin{equation}
\label{eqn:conformality}
c_k(X,r\omega) = r c_k(X,\omega).
\end{equation}
\item{(Increasing)}
\[
0=c_0(X,\omega) < c_1(X,\omega) \le c_2(X,\omega) \le \cdots \le +\infty.
\]
\item{(Disjoint Union)}
\[
c_k\left(\coprod_{i=1}^m(X_i,\omega_i)\right) = \max_{k_1+\cdots+k_m=k}\sum_{i=1}^m c_{k_i}(X_i,\omega_i).
\]
\item{(Sublinearity)}
\[
c_{k+l}(X,\omega) \le c_k(X,\omega) + c_l(X,\omega).
\]
\item{(Monotonicity)}
If there exists a symplectic embedding $\varphi: (X,\omega)\to (X',\omega')$, then
\[
c_k(X,\omega) \le c_k(X',\omega').
\]
\item{($C^0$-Continuity)}
For each $k$, the capacity $c_k$ defines a continuous function on the set of star-shaped domains in $\R^{4}$ with respect to the Hausdorff metric on compact sets.
\item{(Spectrality)}
If $(X,\omega)$ is a four-dimensional Liouville domain with boundary $Y$, then for each $k$ with $c_k(X,\omega)<\infty$, there exists an orbit set $\alpha$ in $Y$, which is nullhomologous in $X$, with $c_k(X,\omega)=\mc{A}(\alpha)$.
\item{(ECH Index)}
If $X$ is a nondegenerate star-shaped domain in $\R^4$, then $c_k(X)<\infty$, and in the Spectrality property, we can choose $\alpha$ so that its ECH index\footnote{See e.g.\ \cite[Def.\ 5.2]{ruelle} for the definition of the ECH index of $\alpha$. The definition there is stated for ECH generators (a special kind of orbit set, see Remark~\ref{rem:generator}), but is valid for arbitrary orbit sets.} satisfies $I(\alpha)\ge 2k$.
\item{(Ball)}
\[
c_k(B^4(a))=da
\]
where $d$ is the unique nonnegative integer with
\[
d^2+d \le 2k \le d^2+3d.
\]
\item{(Asymptotics)}
If $X\subset\R^4$ is a compact domain with smooth boundary, then
\[
c_k(X) = 2\op{vol}(X)^{1/2}k^{1/2} + O(k^{1/4}).
\]
\end{description}
\end{theorem}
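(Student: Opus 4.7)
A first block of properties follows essentially formally from Definition~\ref{def:main}. \emph{Conformality} is immediate because rescaling $\omega$ by $r$ scales the energy of every curve by $r$ while leaving $\mc{J}(\overline{X})$ unchanged. \emph{Monotonicity} is Lemma~\ref{lem:monotonicity}. For \emph{Disjoint Union}, every $u\in\M^J(\overline{X};x_1,\ldots,x_k)$ decomposes by which component of $X$ each piece of the domain maps into, reducing the infimum to an optimization over partitions $k_1+\cdots+k_m=k$; the supremum is realized by choosing data on each factor independently. \emph{Sublinearity} follows by juxtaposing minimizing curves on disjoint point sets, and \emph{Increasing} holds because a larger constraint set shrinks the feasible set of curves, together with $c_0=0$ realized by the empty map. \emph{$C^0$-Continuity} on star-shaped domains follows from Conformality and Monotonicity by sandwiching a Hausdorff-close $X'$ between $(1-\epsilon)X$ and $(1+\epsilon)X$.

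For \emph{Spectrality}, Remark~\ref{rem:mc} lets us restrict to curves with no multiply covered components, for which Stokes' theorem gives $\mc{E}(u)=\mc{A}(\alpha)$ whenever $u$ is asymptotic to the orbit set $\alpha$. A sequence $(J_n, x^{(n)}, u_n)$ approaching the sup-inf in \eqref{eqn:maindef} produces orbit sets $\alpha_n$ with $\mc{A}(\alpha_n)\to c_k(X)$; since only finitely many orbit sets have action below $c_k(X)+1$, a subsequence has $\alpha_n=\alpha$ constant, giving $c_k(X)=\mc{A}(\alpha)$. The \emph{ECH Index} property uses the same minimizing sequence together with (i) finiteness $c_k(X)<\infty$ (from the \emph{Ball} property below applied to a containing ball by Monotonicity) and (ii) the ECH index inequality $\op{ind}(u)\le I(u)=I(\alpha)$ for somewhere injective curves, combined with the fact that $k$ generic point constraints force $\op{ind}(u)\ge 2k$ on any nonempty moduli space.

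The central computation is the \emph{Ball} property. The completion $\overline{B^4(a)}$ is symplectomorphic to the complement of a point in $\C P^2$ equipped with a Fubini--Study form scaled so that the line class has area $a$. For the standard complex structure, every holomorphic curve is algebraic of some degree $d$ with energy $da$. Plane curves of degree $d$ form a linear system of projective dimension $d(d+3)/2$, so a curve of degree $d$ through $k$ generic points exists iff $2k\le d^2+3d$; the minimal such $d$ satisfies $d^2+d\le 2k\le d^2+3d$, yielding $c_k(B^4(a))\le da$. The matching lower bound uses positivity of intersections in dimension four: a $J$-holomorphic line through any prescribed point exists by Gromov's theorem, and intersects a curve through $k$ generic points in at least $d$ points, forcing degree $\ge d$. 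The upper bound for \emph{arbitrary} $J$ -- persistence of degree-$d$ curves through $d(d+3)/2$ points -- is the hardest step, carried out via Gromov compactness and the dimension-four observation of Remark~\ref{rem:mc} that allows discarding multiply covered components from limits.

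Finally, for \emph{Asymptotics}, the lower bound follows from a McDuff--Polterovich interior ball packing combined with \emph{Disjoint Union}: packing $X$ by $N^2$ disjoint balls of total volume $\approx\op{vol}(X)$ and distributing $k_i\approx k/N^2$ constraints per ball gives
\[
c_k(X)\;\ge\;\sum_i 2\sqrt{V_i k_i}\;\approx\;2\sqrt{\op{vol}(X)\cdot k}
\]
by concavity of $\sqrt{\cdot}$, with $O(k^{1/4})$ error after optimizing $N$ against $k$. The upper bound is the main obstacle: I would approximate $X$ from outside by a convex toric or ellipsoidal domain with nearly the same volume and invoke Monotonicity, provided $c_k$ has an explicit combinatorial expression on such model regions (which should bootstrap from the \emph{Ball} formula using \emph{Disjoint Union} and inductive cutting arguments). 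The $O(k^{1/4})$ remainder then matches the sharp Weyl-type error known for ECH capacities.
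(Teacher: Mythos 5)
The formal properties, \emph{$C^0$-Continuity}, \emph{Spectrality} (for nondegenerate domains), and the overall shape of the \emph{ECH Index} argument match the paper; note only that \emph{Spectrality} is asserted for possibly degenerate Liouville domains, so you still need the approximation step via \eqref{eqn:shrinkingcapacity} and Monotonicity, and that your finite-action claim requires nondegeneracy. The genuine gaps are in the \emph{Ball} property. For the upper bound, your identification of $\overline{B^4(a)}$ with $\C P^2$ minus a point is false (the completion is $\R^4$, of infinite volume); the paper instead embeds $B^4(a)\subset\C P^2(a)$ and uses Monotonicity. More seriously, you correctly isolate the crux --- existence, for \emph{every} compatible $J$, of a degree-$d$ curve through $(d^2+3d)/2$ points --- but ``Gromov compactness plus discarding multiple covers'' cannot produce a curve where none is known to exist; you need a nonvanishing deformation-invariant count, and the paper invokes Taubes's ``Seiberg-Witten = Gromov'' theorem (Gromov--Witten theory would also do). For the lower bound, your positivity-of-intersections argument does not work: there is no reason a line through one prescribed point should meet a curve through $k$ other generic points in at least $d$ points, and in any case the lower bound must be established in the completion of the ball itself (Monotonicity transfers upper bounds from $\C P^2$, not lower bounds), where there are no lines and ``degree'' is replaced by asymptotic data. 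The paper's route is different: it applies the \emph{ECH Index} property to a nearby irrational ellipsoid $E(1-\epsilon,1)$, where the action of the orbit set of ECH index $d^2+d$ is computed explicitly to be $d(1-\epsilon)$, and lets $\epsilon\to0$. Your dimension-count heuristic for low-degree curves would, to be made rigorous for punctured curves in $\overline{B^4(1)}$, require exactly the relative adjunction and index machinery that this ellipsoid computation packages.

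For \emph{Asymptotics}, your ball-packing lower bound is in the right spirit, but the paper does not reprove the asymptotics from scratch: it observes that the argument of the cited reference uses only Monotonicity, Disjoint Union, and the value of the capacities on a cube, and the latter is supplied by Theorem~\ref{thm:convex} (the convex toric domain formula), not by an ad hoc bootstrapping from the Ball formula. The $O(k^{1/4})$ error term in particular is a nontrivial refinement that you cannot simply assert; as written, your upper bound for \emph{Asymptotics} (and hence the full property) remains unproved.
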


\begin{proof}
For admissible symplectic four-manifolds, the Conformality, Increasing, Disjoint Union, and Sublinearity properties follow immediately from Definition~\ref{def:main}. It then follows from Lemma~\ref{lem:monotonicity} and Definition~\ref{def:sup} that these properties, as well as the Monotonicity property, also hold for general symplectic four-manifolds. 

The $C^0$-Continuity property follows from Conformality and Monotonicity, since if two star-shaped domains are close in the Hausdorff metric, then each is contained in the scaling of the other by a number slightly larger than $1$. Note here that if $X$ is a star-shaped domain and $r>0$, then Conformality implies that $c_k(rX)=r^2c_k(X)$.

To prove the Spectrality property, suppose first that $(X,\omega)$ is a nondegenerate Liouville domain with $c_k(X,\omega)<\infty$. It follows from the definition of $c_k$ that there is an orbit set $\alpha$ with $c_k(X,\omega)=\mc{A}(\alpha)$, because in \eqref{eqn:maindef}, for every curve $u$, the energy $\mc{E}(u)$ is the action of some orbit set $\alpha$, and the set of all such actions is discrete. Also $\alpha$ is nullhomologous in $X$ because there is a holomorphic curve in $\overline{X}$ asymptotic to it.

If $(X,\omega)$ is a degenerate Liouville domain, then the Spectrality property follows by approximating with nondegenerate Liouville domains and using \eqref{eqn:shrinkingcapacity} and Monotonicity as in the proof of $C^0$ continuity.

To prove the ECH Index property, first note that $c_k(X)<\infty$ by Monotonicity and the upper bound on $c_k$ of a ball proved in \eqref{eqn:ballub} below. Recall from Remark~\ref{rem:mc} that in \eqref{eqn:maindef}, we can restrict attention to holomorphic curves that do not have any multiply covered components. Let $\M^J_*(\overline{X},\alpha;x_1,\ldots,x_k)$ denote the set of curves in $\M^J(\overline{X},\alpha;x_1,\ldots,x_k)$ without multiply covered components. The hypothesis that $X$ is nondegenerate implies that the set of symplectic actions of orbit sets in $\partial X$ is discrete, so we can rewrite \eqref{eqn:maindef} as
\begin{equation}
\label{eqn:rewrite}
c_k(X) = \max_{\substack{J\in\mc{J}(\overline{X})\\ \mbox{\scriptsize $x_1,\ldots,x_k\in X$ distinct}}}\min \left\{\mc{A}(\alpha) \;\big|\;  \M^J_*\left(\overline{X},\alpha;x_1,\ldots,x_k\right) \neq \emptyset\right\}.
\end{equation}

If $u\in\M^J_*(\overline{X},\alpha;x_1,\ldots,x_k)$, then it follows from the ECH index inequality, see e.g.\ \cite[\S3.4]{bn}, that
\begin{equation}
    \label{eqn:indexinequality}
\op{ind}(u) \le I(\alpha).
\end{equation}
Here $\op{ind}(u)$ denotes the Fredholm index of $u$, which for generic $J\in\mc{J}(\overline{X})$ is the dimension of the component of the moduli space $\M^J_*(\overline{X},\alpha)$ containing $u$. In particular, if $J\in\mc{J}(\overline{X})$ and $x_1,\ldots,x_k\in X$ are generic, then for any $u\in \M^J_*(\overline{X},\alpha;x_1,\ldots,x_k)$, the dimension of the component of the latter moduli space containing $u$ is $\op{ind}(u) - 2k \ge 0$, so if the latter moduli space is nonempty then by \eqref{eqn:indexinequality} we have $I(\alpha)\ge 2k$.
It follows that for generic $J\in\mc{J}(\overline{X})$ and $x_1,\ldots,x_k\in X$, the minimum in \eqref{eqn:rewrite} has the form $\mc{A}(\alpha)$ where $I(\alpha)\ge 2k$. By Gromov compactness as in the proof of Lemma~\ref{lem:monotonicity}, the maximum in \eqref{eqn:rewrite} must be realized by generic $J\in\mc{J}(\overline{X})$ and $x_1,\ldots,x_k\in X$.

To prepare for the proof of the Ball property, if $a,b>0$, define the ellipsoid
\[
E(a,b)=\left\{z\in\C^2 \;\bigg|\; \frac{\pi|z_1|^2}{a} + \frac{\pi|z_2|^2}{b} \le 1\right\}.
\]
Calculations e.g.\ in \cite[\S3.7]{bn} show that for any ellipsoid $E(a,b)$ with $a/b$ irrational, there are just two simple Reeb orbits, which have symplectic action $a$ and $b$, and the ECH index defines a bijection from the set of orbit sets to the set of nonnegative even integers. Furthermore the symplectic action is an increasing function of the ECH index. 

To prove the Ball property, by the Conformality property we can assume that $a=1$. Let $\epsilon>0$ be irrational and consider the ellipsoid
\[
E(1-\epsilon,1) \subset E(1,1) = B^4(1).
\]
For a given nonnegative integer $d$, if $\epsilon$ is sufficiently small, then by the previous paragraph, the orbit set of ECH index $d^2+d$ has symplectic action $d(1-\epsilon)$. Taking $\epsilon\to 0$, it follows from the ECH index and Monotonicity properties that 
\begin{equation}
\label{eqn:balllb}
c_{(d^2+d)/2}(B^4(1)) \ge d.
\end{equation}

To complete the proof of the Ball property, by the Increasing property, we need to show that
\begin{equation}
\label{eqn:ballub}
c_{(d^2+3d)/2}(B^4(1)) \le d.
\end{equation}
By Monotonocity, it is enough to show that
\begin{equation}
\label{eqn:fs}
c_{(d^2+3d)/2}(\C P^2,\omega_{FS})\le d.
\end{equation}
Here $\omega_{FS}$ denotes the Fubini-Study form on $\C P^2$, normalized so that a line has symplectic area $1$. To prove \eqref{eqn:fs}, write $k=(d^2+3d)/2$; it is enough to show that for any $J\in\mc{J}(\C P^2,\omega_{FS})$ and any $x_1,\ldots,x_k\in\C P^2$, there exists a $J$-holomorphic curve, possibly with disconnected domain, of total degree $d$ passing through the points $x_1,\ldots,x_k$. For a given $J$, for generic $x_1,\ldots,x_k$ this was shown by Gromov \cite[\S0.2.B]{nonsqueezing} (it also follows from Taubes's ``Seiberg-Witten = Gromov'' theorem as explained in the proof of Theorem~\ref{thm:closed} below), and for arbitrary $x_1,\ldots,x_k$ it follows from Gromov compactness.

Finally, the Asymptotics property was shown for ECH capacities in \cite[Thm.\ 1.1]{ruelle}. The proof there just uses the Monotonicity and Disjoint Union properties for ECH capacities and the formula for the ECH capacities of a cube. Theorem~\ref{thm:convex} below implies that for a cube, the ECH capacities and the capacities $c_k$ agree. Hence the Asymptotics property also holds for the capacities $c_k$.
\end{proof}

\begin{remark}
\label{rem:generator}
The properties of the capacities $c_k$ in Theorem~\ref{thm:properties}, aside from the Sublinearity property, are also known to hold for ECH capacities. These properties of ECH capacities were proved in \cite{qech}, except for the Asymptotics property, which is a later refinement proved in \cite{ruelle}.

For the ECH capacities, a slighty stronger version of the ECH Index property follows from the definition of ECH capacities reviewed in \S\ref{sec:compare} below: namely one can arrange that $I(\alpha)=2k$, and furthermore that the orbit set $\alpha$ is an ECH generator. Here we say that an orbit set $\alpha=\{(\alpha_i,m_i)\}$ is an {\em ECH generator\/} if $m_i=1$ whenever $\alpha_i$ is hyperbolic (meaning that the linearized return map has real eigenvalues).
\end{remark}

\begin{remark}
Some applications of ECH capacities only need the properties in Theorem~\ref{thm:properties}, and thus can be re-proved using the capacities $c_k$. For example, Irie \cite{irie} proved a $C^\infty$ closing lemma for Reeb vector fields on closed three-manifolds, using the asymptotics of the ECH spectrum \cite{vc}. In the case of $S^3$ with the standard contact structure, which corresponds to star-shaped hypersurfaces in $\R^4$, the ECH spectrum agrees with the ECH capacities of the corresponding star-shaped domain, and Irie's proof of the closing lemma works using only the $C^0$-Continuity, Spectrality, and Asymptotics properties in Theorem~\ref{thm:properties}.
\end{remark}


\section{Computation for convex toric domains}
\label{sec:toric}

We now show that for ``convex toric domains'', the capacities $c_k$ agree with a known combinatorial formula for their ECH capacities\footnote{This formula appears in \cite[Prop.\ 5.6]{beyond}. It is a specialization of a result in \cite[Cor.\ A.12]{concaveconvex} computing the ECH capacities of a more general notion of ``convex toric domain''.}. In fact, the capacities $c_k$ for these examples are uniquely determined by the properties in Theorem~\ref{thm:properties}.

Let $\Omega$ be a compact domain in $\R^2_{\ge 0}$. Define the {\em toric domain\/}
\[
X_\Omega = \left\{z\in\C^n \;\big|\; \pi\left(|z_1|^2,|z_2|^2\right)\in\Omega\right\}.
\]
Define a (four-dimensional) {\em convex toric domain\/} to be a toric domain $X_\Omega$ as above such that the set
\[
\widehat{\Omega} = \left\{\mu\in\R^2 \;\big|\; \left(|\mu_1|,|\mu_2|\right)\in\Omega\right\}
\]
is convex\footnote{This is slightly misleading terminology, as a ``convex toric domain'' is not the same thing as a toric domain that is convex; see \cite[\S2]{ghr} for clarification.}. Define a (four-dimensional) {\em concave toric domain\/} to be a toric domain $X_\Omega$ such that the set $\R^2_{\ge 0}\setminus\Omega$ is convex.

If $X_\Omega$ is a four-dimensional convex toric domain, let $\|\cdot\|_\Omega^*$ denote the norm on $\R^2$ defined by
\[
\|v\|_\Omega^* = \max\left\{\langle v,w\rangle \;\big|\; w\in\widehat{\Omega}\right\}.
\]
If $\gamma:[\alpha,\beta]\to\R^2$ is a continuous, piecewise differentiable curve, define its {\em $\Omega$-length\/} by
\begin{equation}
\label{eqn:omegalength}
\ell_\Omega(\gamma) = \int_\alpha^\beta \|J\gamma'(t)\|_\Omega^*\,dt
\end{equation}
where $J=\begin{pmatrix} 0 & -1 \\ 1 & 0 \end{pmatrix}$.

Define a {\em convex integral path\/} to be a polygonal path $\Lambda$ in the nonnegative quadrant from the point $(0,b)$ to the point $(a,0)$, for some nonnegative integers $a$ and $b$, with vertices at lattice points, such that the region bounded by $\Lambda$ and the line segments from $(0,0)$ to $(a,0)$ and from $(0,0)$ to $(0,b)$ is convex. Define $\widehat{\mc{L}}(\Lambda)$ to be the number of lattice points in this region, including lattice points on the boundary.

\begin{theorem}
\label{thm:convex}
If $X_\Omega$ is a four-dimensional convex toric domain, then
\begin{equation}
\label{eqn:convex}
c_k(X_\Omega) = \min\{\ell_\Omega(\Lambda) \mid \widehat{\mc{L}}(\Lambda)= k+1\}
\end{equation}
where the minimum is over convex integral paths $\Lambda$.
\end{theorem}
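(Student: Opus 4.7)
The plan is to prove both inequalities in \eqref{eqn:convex}, writing $M_k(\Omega)$ for the right-hand side.

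I would first reduce to the case where $X_\Omega$ is a smooth nondegenerate Liouville domain. Approximating $\Omega$ from both sides by smooth strictly convex regions whose associated toric domains have nondegenerate contact boundary, the Conformality, Monotonicity, and $C^0$-Continuity properties from Theorem~\ref{thm:properties} reduce the problem to this setting, and it is elementary to check that $M_k(\Omega)$ is continuous under such approximations.

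For the lower bound $c_k(X_\Omega) \ge M_k(\Omega)$, I would invoke Spectrality and ECH Index. Spectrality produces an orbit set $\alpha$ on $\partial X_\Omega$ with $c_k(X_\Omega) = \mc{A}(\alpha)$, and ECH Index gives $I(\alpha) \ge 2k$. The combinatorial heart of the argument, which is a purely boundary-contact computation and parallels the ECH calculation in \cite[App.\ A]{concaveconvex}, is the identification, after a small nondegenerate perturbation of the boundary contact form, of orbit sets $\alpha$ on $\partial X_\Omega$ with convex integral paths $\Lambda$: the symplectic action equals $\ell_\Omega(\Lambda)$ and the ECH index equals $2\widehat{\mc{L}}(\Lambda) - 2$. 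Combining, $c_k(X_\Omega) = \ell_\Omega(\Lambda) \ge \min\{\ell_\Omega(\Lambda') : \widehat{\mc{L}}(\Lambda') \ge k+1\} = M_k(\Omega)$.

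For the upper bound $c_k(X_\Omega) \le M_k(\Omega)$, the plan is to use Monotonicity in concert with the Ball and Disjoint Union properties. A convex integral path $\Lambda$ realizing $M_k(\Omega)$ decomposes into primitive integer edge vectors $v_1, \ldots, v_n$; using a standard symplectic ``weight expansion'' construction (compare \cite{concaveconvex}) one arranges symplectic embeddings of disjoint unions of balls whose total capacity, computed via Ball and Disjoint Union, matches $\sum_i \ell_\Omega(v_i) = \ell_\Omega(\Lambda)$. Alternatively, one can try a neck-stretching argument along $\partial X_\Omega$ as in the proof of Lemma~\ref{lem:monotonicity}, using the local Gromov compactness of \cite{dw} to produce, for any $J$ and any choice of $k$ points, a limiting holomorphic curve whose energy is the action of some orbit set, and then invoking the same combinatorial identification to conclude.

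The main obstacle is the upper bound. The lower bound is essentially axiomatic once the boundary Reeb orbits and their ECH indices are enumerated, but the upper bound requires genuine symplectic embedding input tailored to the path $\Lambda$. In the ECH setting this input is supplied by Seiberg-Witten cobordism maps; here it must be replaced by an elementary construction that produces either the relevant embeddings or the relevant low-energy curves, using only the axiomatic properties of Theorem~\ref{thm:properties}. This is the delicate step where the absence of Floer-theoretic machinery must be compensated by direct geometry.
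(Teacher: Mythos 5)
Your lower bound is essentially the paper's argument: perturb to a nondegenerate star-shaped domain, apply the Spectrality and ECH Index properties, and translate orbit sets into convex integral paths. One correction: the combinatorial dictionary (\cite[Lem.~5.4]{beyond}) does not give $I(\alpha)=2\widehat{\mc{L}}(\Lambda)-2$; the assignment of a convex generator $\Lambda$ to an orbit set $\alpha$ is many-to-one, and one only has the inequality $I(\alpha)\le 2(\widehat{\mc{L}}(\Lambda)-1)$ together with $|\mc{A}(\alpha)-\ell_\Omega(\Lambda)|<\epsilon$. This is harmless, since the inequality points the right way: $I(\alpha)\ge 2k$ forces $\widehat{\mc{L}}(\Lambda)\ge k+1$, which is all the lower bound needs.

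The upper bound, which you correctly flag as the main obstacle, is where your proposal has a genuine gap. Embedding balls \emph{into} $X_\Omega$ (the usual weight expansion) can only produce lower bounds on $c_k(X_\Omega)$ via Monotonicity; to bound $c_k(X_\Omega)$ from above you must embed $X_\Omega$ into something whose capacities you already control. The paper's device is the \emph{negative} weight expansion of \cite[\S2.2]{concaveconvex}: a sequence $(a_1,a_2,\ldots)$ of positive reals such that $X_\Omega\sqcup\coprod_i\op{int}(B^4(a_i))$ embeds symplectically into $B^4(a)$, where $a$ is minimal with $X_\Omega\subset B^4(a)$. Monotonicity and Disjoint Union then give
\[
c_k(X_\Omega)\le\inf_{l\ge 0}\Bigl(c_{k+l}(B^4(a))-c_l\Bigl(\coprod_{i\le l}B^4(a_i)\Bigr)\Bigr),
\]
the Ball and Disjoint Union properties identify the right-hand side with the corresponding expression in ECH capacities, and the purely combinatorial computation of \cite[\S A.3]{concaveconvex} bounds that expression by $\min\{\ell_\Omega(\Lambda)\mid\widehat{\mc{L}}(\Lambda)\ge k+1\}$, which by a further observation in \cite[\S A.3]{concaveconvex} equals the minimum over $\widehat{\mc{L}}(\Lambda)=k+1$. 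Note that the only non-elementary input is already packaged in the Ball property (Seiberg--Witten or Gromov--Witten theory on $\C P^2$); no new curve-existence result tailored to $\Lambda$ is required. Your alternative neck-stretching route does not close the gap: stretching along $\partial X_\Omega$ only transports curves already known to exist in the ambient manifold, so it presupposes exactly the low-energy existence statement you are trying to prove.
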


\begin{proof}
It is shown in \cite[Lem.\ 5.4]{beyond} that given $L,\epsilon>0$, there is a nondegenerate star-shaped domain $X'$ with $\op{dist}_{C^0}(X',X_\Omega)<\epsilon$ with the following property: Every orbit set $\alpha$ for $X'$ with action $\mc{A}(\alpha)<L$ determines a convex integral path $\Lambda$ such that $|\mc{A}(\alpha)-\ell_\Omega(\Lambda)|<\epsilon$ and the ECH index $I(\alpha)\le 2(\widehat{\mc{L}}(\Lambda)-1)$. It then follows from the $C^0$-Continuity and ECH Index properties in Theorem~\ref{thm:properties} that
\begin{equation}
\label{eqn:ineq1}
c_k(X_\Omega) \ge \min\{\ell_\Omega(\Lambda) \mid \widehat{\mc{L}}(\Lambda)\ge k+1\}.
\end{equation}

We now prove the reverse inequality. Let $a>0$ be the smallest real number such that $X_\Omega\subset B^4(a)$. In \cite[\S2.2]{concaveconvex}, a ``negative weight sequence'' is defined; this is a nonincreasing (possibly finite) sequence of positive real numbers $(a_1,a_2,\ldots)$. It has the property that there is a symplectic embedding
\[
X_\Omega \sqcup \coprod_i\op{int}(B^4(a_i)) \longrightarrow B^4(a)
\]
which fills the volume of $B^4(a)$. It follows from the Disjoint Union property that
\[
c_k(X_\Omega) \le \inf_{l\ge 0}\left(c_{k+l}(B^4(a)) - c_l\left(\coprod_{i\le l}B^4(a_i)\right)\right).
\]
Furthermore, $c_k$ agrees with $c_k^{\op{ECH}}$ for a disjoint union of balls by the Disjoint Union and Ball properties, so we can rewrite the above inequality as
\begin{equation}
\label{eqn:ineq2}
c_k(X_\Omega) \le \inf_{l\ge 0}\left(c_{k+l}^{\op{ECH}}(B^4(a)) - c_l^{\op{ECH}}\left(\coprod_{i\le l}B^4(a_i)\right)\right).
\end{equation}
Finally, a combinatorial calculation in \cite[\S A.3]{concaveconvex} shows that the right hand side of \eqref{eqn:ineq2} is less than or equal to the right hand side of \eqref{eqn:ineq1}.

To complete the proof, we observe that
\[
\min\{\ell_\Omega(\Lambda) \mid \widehat{\mc{L}}(\Lambda)\ge k+1\} = \min\{\ell_\Omega(\Lambda) \mid \widehat{\mc{L}}(\Lambda) = k+1\},
\]
as explained in \cite[\S A.3]{concaveconvex}.
\end{proof}

\begin{remark}
\label{rem:beyond}
By Theorem~\ref{thm:convex} and \cite[Prop.\ 5.6]{beyond}, the capacities $c_k$ agree with the ECH capacities for convex toric domains. It follows from the Monotonicity property that all obstructions to symplectic embeddings between convex toric domains coming from ECH capacities can be recovered using the capacities $c_k$.
\end{remark}

\begin{remark}
Going beyond ECH capacities, it is shown in \cite[Thm.\ 1.19]{beyond} that if $X_\Omega$ and $X_{\Omega'}$ are four-dimensional convex toric domains, and if there exists a symplectic embedding $X_\Omega\to X_{\Omega'}$, then a certain combinatorial criterion holds. This leads to stronger symplectic embedding obstructions in some cases where ECH capacities do not give sharp obstructions, for example to symplectically embedding a polydisk into a ball or ellipsoid; see \cite{beyond,seven,pebble}.

The proof of \cite[Thm.\ 1.19]{beyond} rests on the existence of an ECH index 0 holomorphic curve with certain properties in (the completion of) a symplectic cobordism between the (perturbed) boundaries of $X_\Omega$ and $X_{\Omega'}$, which is produced using Seiberg-Witten theory. One can re-prove the existence of such a curve using the methods of this paper, namely by using the existence of curves in $\overline{X_{\Omega'}}$ with point constraints in the image of $X_{\Omega}$, as guaranteed by the capacities $c_k$, and then neck stretching as in the proof of Lemma~\ref{lem:monotonicity}.
\end{remark}


\section{Comparison with ECH capacities}
\label{sec:compare}

Aside from the examples of toric domains, we do not know to what extent $c_k$ agrees with $c_k^{\op{ECH}}$, but we do have the following general fact, whose proof (and statement) use Seiberg-Witten theory:

\begin{theorem}
\label{thm:compare}
Let $X$ be a four-dimensional Liouville domain and let $k$ be a nonnegative integer. Then
\[
c_k(X) \le c_k^{\op{ECH}}(X).
\]
\end{theorem}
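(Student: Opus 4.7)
The plan is to show that for every $J \in \mc{J}(\overline{X})$, every tuple of distinct points $x_1,\ldots,x_k \in X$, and every $\epsilon > 0$, there exists $u \in \mc{M}^J(\overline{X}; x_1, \ldots, x_k)$ with $\mc{E}(u) \le c_k^{\op{ECH}}(X) + \epsilon$; taking the supremum over $J$ and the $x_i$ and sending $\epsilon \to 0$ then yields the theorem. As a preliminary reduction, I would shrink $X$ slightly via the Liouville flow as in the proof of Lemma~\ref{lem:monotonicity} and invoke Conformality together with the $C^0$-continuity of both $c_k$ and $c_k^{\op{ECH}}$ to reduce to the case where $\partial X$ carries a nondegenerate contact form, absorbing the resulting small loss into $\epsilon$.

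The heart of the argument is to produce, for generic cobordism-compatible $J$ and generic distinct points $p_1, \ldots, p_k \in X$, a $J$-holomorphic current $C$ in $\overline{X}$ passing through every $p_i$ whose positive end is an orbit set $\alpha$ with $\mc{A}(\alpha) \le c_k^{\op{ECH}}(X)$ and ECH index $I(\alpha) = 2k$. This should be essentially built into the definition: $c_k^{\op{ECH}}(X)$ is the minimum action $L$ such that the unique class $\zeta_k \in ECH_{2k}(\partial X) \cong \Z$ satisfying $U^k \zeta_k = [\emptyset]$ admits a cycle representative supported at action at most $L$. Geometrically the $U$-map counts ECH index $2$ currents in $\R \times \partial X$ through a marked point, so the $k$-fold composition $U^k$ counts ECH index $2k$ currents through $k$ marked points. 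Taubes's SW = Gromov theorem identifies ECH with Seiberg-Witten Floer cohomology at the chain level, and the large-$r$ limit of point-constrained Seiberg-Witten solutions recovers a pseudoholomorphic current through the constraint points; applying this machinery to the cobordism $\overline{X}$ produces the desired current with the stated action bound.

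Given such a current $C$, Remark~\ref{rem:mc} lets me discard multiply covered components and obtain a curve $u$ without multiply covered components still passing through all $p_i$, with $\mc{E}(u) \le \mc{A}(\alpha) \le c_k^{\op{ECH}}(X)$. To handle non-generic $(J, x_1, \ldots, x_k)$, I would approximate by generic $(J_n, x_1^n, \ldots, x_k^n) \to (J, x_1, \ldots, x_k)$, extract curves $u_n$ as above, and pass to a subsequential limit via the local Gromov compactness with currents used in the proof of Lemma~\ref{lem:monotonicity}. Since only finitely many orbit sets lie below any given action level, I may fix both the positive asymptotic orbit set $\alpha$ and the relative homology class along the subsequence; the relative adjunction formula and asymptotic writhe bound then provide a uniform genus bound, and the limiting proper holomorphic curve $u$ passes through $x_1, \ldots, x_k$ with $\mc{E}(u) \le c_k^{\op{ECH}}(X) + \epsilon$.

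The main obstacle is the middle step: extracting an honest point-passing holomorphic current from the chain-level $U^k$-characterization of $c_k^{\op{ECH}}$ requires Taubes's SW = Gromov theorem in an essential way, which is precisely why the theorem is advertised as depending on Seiberg-Witten theory. The remaining reductions and the compactness step adapt directly from the proof of Lemma~\ref{lem:monotonicity} already in the paper.
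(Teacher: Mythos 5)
Your proposal follows essentially the same route as the paper: reduce to the nondegenerate case by $C^0$-continuity, then use the Seiberg--Witten-theoretic package (the cobordism map $\Phi:ECH^L(Y,\lambda)\to\Z/2$ induced by the filling, together with Taubes's identification of the $U$-map with point-constrained current counts) to convert the algebraic condition $U^k\eta=[\emptyset]$ defining $c_k^{\op{ECH}}$ into a $J$-holomorphic curve in $\overline{X}$ through the $k$ chosen points with action below $c_k^{\op{ECH}}(X)+\epsilon$, and conclude via \eqref{eqn:rewrite}. One caveat: the statement you take as the heart of the argument --- that for generic $J$ and generic points one obtains an honest ECH index $2k$ current through the points, ``essentially built into the definition'' --- is only heuristic; what is actually provable, via the holomorphic curves axiom for ECH cobordism maps in \cite[Thm.\ 1.9]{cc2} and the comparison of $U$-maps in \cite[Thm.\ 1.1]{taubes5}, is the weaker assertion that $\Phi\circ U^k$ is induced by a (noncanonical) cocycle supported on generators $\alpha$ admitting a \emph{broken} $J$-holomorphic current through $x_1,\ldots,x_k$, with no control on $I(\alpha)$ beyond the action bound. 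This weaker statement still suffices, and since it holds for \emph{arbitrary} $J\in\mc{J}(\overline{X})$ and arbitrary point configurations, your final genericity-plus-Gromov-compactness step is not needed.
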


To prepare for the proof of Theorem~\ref{thm:compare}, we now recall the definition of the ECH capacities $c_k^{\op{ECH}}$, for the simplest case of four-dimensional nondegenerate Liouville domains with connected boundary.

Let $Y$ be a closed oriented three-manifold and let $\lambda$ be a nondegenerate contact form on $Y$.  The following is an outline of the definition of the {\em embedded contact homology\/} $ECH(Y,\lambda)$. We define $ECC(Y,\lambda)$ to be the free $\Z/2$-module\footnote{It is also possible to define ECH with integer coefficients \cite[\S9]{obg2}.} generated by the ECH generators; see Remark~\ref{rem:generator}. For a generic $\lambda$-compatible almost complex structure $J$ on $\R\times Y$, the ECH differential
\[
\partial_J:ECC(Y,\lambda)\longrightarrow ECC(Y,\lambda)
\]
is defined as follows. If $\alpha$ and $\beta$ are ECH generators, then the coefficient of $\beta$ in $\partial_J\alpha$, which we denote by $\langle\partial_J\alpha,\beta\rangle \in \Z/2$, is a mod 2 count of ``$J$-holomorphic currents'' $\mc{C}$ in $\R\times Y$, modulo $\R$ translation, that are asymptotic to $\alpha$ as $s\to+\infty$ and to $\beta$ as $s\to-\infty$, and that have ECH index $I(\mc{C})=1$. See \cite[\S3]{bn} for detailed definitions. It is shown in \cite{obg1} that $\partial_J^2=0$. We define $ECH(Y,\lambda)$ to be the homology of the chain complex $(ECC(Y,\lambda),\partial_J)$.

It follows from the definition of $\lambda$-compatible almost complex structure that the ECH differential decreases symplectic action:
\begin{equation}
\label{eqn:dda}
\langle\partial_J\alpha,\beta\rangle\neq 0 \Longrightarrow \mc{A}(\alpha)>\mc{A}(\beta).
\end{equation}
As a result, for each $L\in\R$, the ECH generators with action less than $L$ span a subcomplex of $(ECC(Y,\lambda),\partial_J)$. We define the {\em filtered ECH}, which we denote by $ECH^L(Y,\lambda)$, to be the homology of this subcomplex.

It was shown by Taubes \cite{taubes} that $ECH(Y,\lambda)$ is isomorphic to a version of Seiberg-Witten Floer cohomology defined by Kronheimer-Mrowka \cite{km}. Taubes's isomorphism was used in \cite[Thm.\ 1.3]{cc2} to show that $ECH(Y,\lambda)$ and $ECH^L(Y,\lambda)$ do not depend on $J$; that is, the homologies for different choices of $J$ are canonically isomorphic to each other.

There is also a map
\[
U:ECH^L(Y,\lambda)\longrightarrow ECH^L(Y,\lambda)
\]
induced by a chain map which counts $J$-holomorphic currents with ECH index 2 passing through a base point in $\R\times Y$. This map does not depend on the choice of base point when $Y$ is connected; otherwise it depends on a choice of connected component of $Y$. See \cite[\S2.5]{wh} for more details.

Now let $(X,\omega)$ be a four-dimensional nondegenerate Liouville domain with connected boundary $Y$ and associated contact form $\lambda$. In this case the $k^{th}$ ECH capacity is defined by
\begin{equation}
\label{eqn:defckech}
c_k^{\op{ECH}}(X,\omega) = \inf\left\{L\ge 0 \;\big|\; \exists \eta\in ECH^L(Y,\lambda): U^k\eta=[\emptyset]\right\}.
\end{equation}
Here $[\emptyset]$ is the homology class in $ECH^L(Y,\lambda)$ of the empty set of Reeb orbits, which is a cycle by \eqref{eqn:dda}. Note that by \eqref{eqn:cobordismmap} below, the existence of an exact filling of $Y$ (namely the Liouville domain $X$) implies that the class $[\emptyset]\neq 0$ in $ECH^L(Y,\lambda)$.

\begin{proof}[Proof of Theorem~\ref{thm:compare}.]
Let $Y$ denote the boundary of $X$. For brevity we just explain the case when $Y$ is connected; the general case follows by a similar argument using the more general definition of ECH capacities in \cite[Def.\ 4.3]{qech}.

Since $c_k$ and $c_k^{\op{ECH}}$ both satisfy Conformality and Monotonicity, by a continuity argument using \eqref{eqn:shrinkingcapacity} and the analogous equation for $c_k^{\op{ECH}}$, we can assume without loss of generality that $X$ is nondegenerate.

Let $\lambda$ denote the contact form on $Y$. As explained for example in \cite[Thm.\ 2.3]{qech}, for each $L\ge 0$, the exact filling $X$ of $Y$ induces a cobordism map
\begin{equation}
    \label{eqn:cobordismmap}
\Phi:ECH^L(Y,\lambda) \longrightarrow \Z/2,
\end{equation}
defined using Seiberg-Witten theory, which sends $[\emptyset]$ to $1$. 

Now suppose that $J\in\mc{J}(\overline{X})$ and $x_1,\ldots,x_k\in X$. Heuristically one might expect that if $J$ and $x_1,\ldots,x_k$ are generic, then the composition
\begin{equation}
\label{eqn:composition}
\Phi\circ U^k:ECH^L(Y,\lambda)\longrightarrow \Z/2
\end{equation}
is induced by a cocycle
\[
\phi: ECC^L(Y,\lambda)\longrightarrow \Z/2
\]
that counts $J$-holomorphic curves in $\overline{X}$ with ECH index $2k$ passing through $x_1,\ldots,x_k$. What one can actually prove, as in the ``holomorphic curves axiom'' for ECH cobordism maps in \cite[Thm.\ 1.9]{cc2} and the comparison of $U$ maps in \cite[Thm.\ 1.1]{taubes5}, is the following. For any $J\in\mc{J}(\overline{X})$ and any $x_1,\ldots,x_k\in X$ (not necessarily generic), the composition in \eqref{eqn:composition} is induced by a (noncanonical) cocycle $\phi$ with the following property: If $\alpha$ is an ECH generator and $\phi(\alpha)\neq 0$, then there exists a ``broken $J$-holomorphic current'' in $\overline{X}$ passing through $x_1,\ldots,x_k$. This last statement implies that there is an orbit set $\alpha'$ with $\mc{A}(\alpha')\le \mc{A}(\alpha)$ and a holomorphic curve in $\M^J(\overline{X},\alpha';x_1,\ldots,x_k)$.

Now suppose that $L>c_k^{\op{ECH}}(X)$. Then by the definition of ECH capacities in \eqref{eqn:defckech}, there exists $\eta\in ECH^L(Y,\lambda)$ with $U^k\eta=[\emptyset]$. It follows that $(\Phi\circ U^k)(\eta)=1$. By the previous paragraph, for any $J\in\mc{J}(X)$ and any $x_1,\ldots,x_k\in X$, there exists an ECH generator $\alpha'$ with $\mc{A}(\alpha')<L$ such that $\M^J(X,\alpha';x_1,\ldots,x_k)\neq\emptyset$. It then follows from \eqref{eqn:rewrite} that $c_k(X)\le L$. Since $L>c_k^{\op{ECH}}(X)$ was arbitrary, the theorem follows.
\end{proof}

\begin{remark}
One can understand the inequality in Theorem~\ref{thm:compare} as follows: The number $c_k(X)$ measures the minimal energy of holomorphic curves in $\overline{X}$ through $k$ points that are guaranteed to exist, for whatever reason. On the other hand, $c_k^{\op{ECH}}(X)$ measures the energy of certain holomophic curves in $\overline{X}$ through $k$ points that are guaranteed to exist for ECH reasons.
\end{remark}

\begin{remark}
\label{rem:brayan}
There exist examples of Liouville domains and positive integers $k$ for which $c_k$ is strictly less than $c_k^{\op{ECH}}$. An example is given by the unit cotangent bundle $D^*S^2(4\pi)$, where $S^2(a)$ denotes the $2$-sphere with the round metric of area $a$. It follows from results in \cite{fr,ou} that there exist symplectic embeddings
\[
\op{int}(P(2\pi,2\pi)) \longrightarrow \op{int}(D^*S^2(4\pi)) \longrightarrow S^2(2\pi)\times S^2(2\pi).
\]
Here the left hand side is a polydisk; see equation \eqref{eqn:polydisk} for the notation. We will see in Remark~\ref{rem:closed} below that the capacities $c_k$ are the same for $P(2\pi,2\pi)$ and $S^2(2\pi)\times S^2(2\pi)$, so by Monotonicity they are also the same for $D^*S^2(4\pi)$. However the ECH capacities $c_k^{\op{ECH}}(D^*S^2(4\pi))$ are computed in \cite{fr} and found to be larger for some $k$.

The main reason for the discrepancy is the following: The Spectrality property in Theorem~\ref{thm:properties} asserts that $c_k$ of a Liouville domain $X$ with boundary $Y$ is the action of an orbit set which is nullhomologous in $X$. However by the definition of the ECH capacities in \eqref{eqn:defckech}, $c_k^{\op{ECH}}(X)$ is the action of an orbit set which is nullhomologous in $Y$, a more restrictive condition.
\end{remark}


\section{Additional computations using Seiberg-Witten theory}
\label{sec:SW}

We now compute some additional examples of the capacities $c_k$ using Seiberg-Witten theory (which could perhaps be avoided with more work). 

If $X_\Omega$ is a four-dimensional concave toric domain as defined above, define an ``anti-norm'' on $\R^2$ by
\[
[v]_\Omega = \min\{\langle (|v_1|,|v_2|),w\rangle \mid w\in\partial_+\Omega\}
\]
where $\partial_+\Omega$ denotes the closure of the portion of $\partial\Omega$ not on the axes. If $\gamma$ is a continuous, piecewise differentiable curve in $\R^2$, now define its $\Omega$-length as in \eqref{eqn:omegalength}, but replacing the norm $\|\cdot\|$ by the anti-norm $[\cdot]$.

Define a {\em concave integral path\/} to be a polygonal path $\Lambda$ in the nonnegative quadrant from the point $(0,b)$ to the point $(a,0)$, for some nonnegative integers $a$ and $b$, with vertices at lattice points, which is the graph of a convex function. Define $\check{\mc{L}}(\Lambda)$ to be the number of lattice points in the region bounded by $\Lambda$ and the axes, this time (in contrast to the case of convex toric domains) not including lattice points on $\Lambda$.

\begin{theorem}
\label{thm:concave}
If $X_\Omega$ is a four-dimensional concave toric domain, then
\begin{equation}
\label{eqn:concave}
c_k(X_\Omega) = \max\{\ell_\Omega(\Lambda) \mid \check{\mc{L}}(\Lambda)=k\}
\end{equation}
where the maximum is over concave integral paths $\Lambda$.
\end{theorem}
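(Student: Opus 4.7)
The plan is to mimic the strategy of Theorem~\ref{thm:convex}, but with the roles of the upper and lower bounds exchanged. The upper bound will come from Theorem~\ref{thm:compare} together with the known ECH computation for concave toric domains; the lower bound will come from a symplectic ball packing inside $X_\Omega$ together with the Ball and Disjoint Union properties.

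For the upper bound: after a small $C^0$-perturbation, which affects both sides of \eqref{eqn:concave} by an arbitrarily small amount (by the $C^0$-Continuity property and continuity of $\ell_\Omega$ in $\Omega$), I may assume that $X_\Omega$ is a nondegenerate star-shaped domain in $\R^4$. Theorem~\ref{thm:compare} then gives $c_k(X_\Omega)\le c_k^{\op{ECH}}(X_\Omega)$, and the ECH capacity of a four-dimensional concave toric domain is computed by Cristofaro-Gardiner in \cite{concaveconvex} to equal the right-hand side of \eqref{eqn:concave}.

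For the lower bound: I would invoke the symplectic ball packing of a concave toric domain constructed in \cite[\S2.2]{concaveconvex}. Namely, there is a (possibly infinite) ``positive weight sequence'' $(a_1,a_2,\ldots)$ of positive real numbers together with a symplectic embedding $\coprod_i\op{int}(B^4(a_i))\hookrightarrow X_\Omega$ that fills the volume of $X_\Omega$. Applying Monotonicity to the image of any finite subcollection and combining with the Disjoint Union and Ball properties of Theorem~\ref{thm:properties} gives
\[
c_k(X_\Omega) \;\ge\; c_k\!\left(\coprod_{i=1}^{N} B^4(a_i)\right) \;=\; \max_{k_1+\cdots+k_N = k}\;\sum_{i=1}^{N} c_{k_i}^{\op{ECH}}\!\bigl(B^4(a_i)\bigr).
\]
Taking $N\to\infty$ (only finitely many $k_i$ are nonzero in any partition of $k$, so nothing is lost in the supremum), the right-hand side becomes $c_k^{\op{ECH}}\!\bigl(\coprod_i B^4(a_i)\bigr)$. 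A combinatorial identity proven in \cite{concaveconvex} (the concave analog of the identity used for Theorem~\ref{thm:convex}) equates this quantity with $\max\{\ell_\Omega(\Lambda)\mid\check{\mc{L}}(\Lambda)=k\}$.

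The main obstacle is essentially organizational: the analytic content is already packaged in Theorem~\ref{thm:compare} and in \cite{concaveconvex}. One must check that the truncation to finitely many balls loses nothing in the limit, and that the lattice-point and path-length conventions match (note that in the concave case $\check{\mc{L}}$ excludes lattice points on $\Lambda$ itself, in contrast to $\widehat{\mc{L}}$ in the convex case, which is exactly the difference reflected in the $\max$-versus-$\min$ in the two formulas).
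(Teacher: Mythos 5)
Your proposal is correct and follows essentially the same route as the paper: the lower bound via the weight expansion ball packing of $X_\Omega$ combined with the Monotonicity, Disjoint Union, and Ball properties and the combinatorial computation of $c_k^{\op{ECH}}$ of a disjoint union of balls, and the upper bound via Theorem~\ref{thm:compare} together with the known ECH capacity formula for concave toric domains (which the paper cites as \cite[Thm.\ 1.12]{concave} rather than \cite{concaveconvex}, and the weight expansion as \cite[\S1.3]{concave}). The paper simply truncates the packing at $k$ balls instead of passing to the limit $N\to\infty$, which is equivalent to your observation that only finitely many $k_i$ are nonzero.
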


\begin{remark}
\label{rem:ECHconcave}
It is shown in  \cite[Thm.\ 1.21]{concave} that the same formula holds for the ECH capacities $c_k^{\op{ECH}}(X_\Omega)$.
\end{remark}

\begin{proof}[Proof of Theorem~\ref{thm:concave}.]
In \cite[\S1.3]{concave}, see also \cite[\S1.3]{ruelle}, a ``weight expansion'' of $X_\Omega$ is defined; this is a nonincreasing (possibly finite) sequence of positive real numbers $(a_1,a_2,\ldots)$. There is a symplectic embedding
\[
\coprod_i\op{int}B^4(a_i) \longrightarrow X_\Omega
\]
which fills the volume of $X_\Omega$. It follows from the Monotonicity property that
\[
c_k(X_\Omega) \ge c_k\left(\coprod_{i\le k}B^4(a_i)\right).
\]
By the Ball and Disjoint Union properties, we have
\[
 c_k\left(\coprod_{i\le k}B^4(a_i)\right) = c_k^{\op{ECH}}\left(\coprod_{i\le k}B^4(a_i)\right).
\]
It is shown in \cite[\S2]{concave} by a combinatorial calculation that
\[
 c_k^{\op{ECH}}\left(\coprod_{i\le k}B^4(a_i)\right) \ge  \max\{\ell_\Omega(\Lambda) \mid \check{\mc{L}}(\Lambda)=k\}.
\]
By Remark~\ref{rem:ECHconcave} and Theorem~\ref{thm:compare}, the above inequalities are equalities.
\end{proof}

We now consider some closed symplectic manifolds. Given $a>0$, let $\C P^2(a)$ denote $\C P^2$ with the Fubini-Study form, scaled so that a line has symplectic area $a$. Let $S^2(a)$ denote $S^2$ with a symplectic form of area $a$.

\begin{theorem}
\label{thm:closed}
Let $a,b>0$ and let $k$ be a nonnegative integer.
\begin{description}
\item{(a)}
$c_k(\C P^2(a))=da$ where $d$ is the unique nonnegative integer with $d^2+d\le 2k \le d^2+3d$.
\item{(b)}
$c_k(S^2(a)\times S^2(b)) = \min\{am+bn \mid m,n\in\Z_{\ge 0}, \; (m+1)(n+1)\ge k+1\}$.
\end{description}
\end{theorem}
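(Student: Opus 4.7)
For (a), the plan is to mirror the proof of the Ball property: the upper bound $c_k(\C P^2(a)) \le da$ is already contained in \eqref{eqn:fs}, upgraded to general $a$ by Conformality and to all $k$ with $2k \le d^2 + 3d$ by the Increasing property. The matching lower bound uses the standard symplectic embedding $B^4(a - \epsilon) \hookrightarrow \C P^2(a)$ (obtained, say, by removing a neighborhood of a projective line) together with Monotonicity and the Ball property; sending $\epsilon \to 0$ gives $c_k(\C P^2(a)) \ge da$.

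For the upper bound in (b), I would reproduce the argument that established \eqref{eqn:fs} but on $S^2(a) \times S^2(b)$, taking zero sets of bihomogeneous polynomials of bidegree $(m,n)$ in place of degree $d$ curves in $\C P^2$. These form an $((m+1)(n+1) - 1)$-dimensional linear system containing a curve through any $(m+1)(n+1) - 1$ generic points; each such curve represents the class $m[S^2 \times \{\mathrm{pt}\}] + n[\{\mathrm{pt}\} \times S^2]$ and has symplectic area $am + bn$. Exactly as in the proof of \eqref{eqn:fs}, Taubes's ``Seiberg--Witten = Gromov'' theorem (with point constraints realized via the $U$-map, as in \S\ref{sec:SW}) promotes this existence statement to arbitrary $J \in \mc{J}(\overline{S^2(a) \times S^2(b)}, \omega)$ and arbitrary configurations $x_1, \ldots, x_k$, possibly at the cost of replacing the smooth curve by a broken $J$-holomorphic current of no greater total area whose underlying somewhere injective components still pass through the $x_i$. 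Hence $c_{(m+1)(n+1) - 1}(S^2(a) \times S^2(b)) \le am + bn$, and the Increasing property extends this to all smaller $k$.

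For the lower bound in (b), I would use that the polydisk $P(a - \epsilon, b - \epsilon) = B^2(a - \epsilon) \times B^2(b - \epsilon)$ embeds symplectically in $S^2(a) \times S^2(b)$ for every $\epsilon > 0$ (embed each factor as an open disk of slightly smaller area in the corresponding sphere). Monotonicity gives $c_k(S^2(a) \times S^2(b)) \ge c_k(P(a-\epsilon, b-\epsilon))$, and I would evaluate the right-hand side via Theorem~\ref{thm:convex}. The polydisk is a convex toric domain with $\Omega = [0, a-\epsilon] \times [0, b-\epsilon]$, so $\widehat{\Omega}$ is a centered rectangle and $\|v\|_\Omega^* = (a-\epsilon)|v_1| + (b-\epsilon)|v_2|$. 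Consequently, the length of any convex integral path from $(0, b_0)$ to $(a_0, 0)$ equals $(a-\epsilon) a_0 + (b-\epsilon) b_0$, depending only on the endpoints, while the maximum of $\widehat{\mc{L}}(\Lambda)$ over such paths is $(a_0+1)(b_0+1)$, achieved by the rectangular path $(0, b_0) \to (a_0, b_0) \to (a_0, 0)$. Theorem~\ref{thm:convex} then yields
\[
c_k(P(a-\epsilon, b-\epsilon)) = \min\bigl\{(a-\epsilon) m + (b-\epsilon) n \;\big|\; m, n \in \Z_{\ge 0},\ (m+1)(n+1) \ge k+1\bigr\},
\]
and letting $\epsilon \to 0$ completes (b).

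The main obstacle is the Seiberg--Witten step in the upper bound of (b). For bidegrees with $m, n \ge 2$ the relevant curves have positive arithmetic genus $(m-1)(n-1)$, so upgrading existence from the integrable complex structure and generic points to arbitrary $(J, x_1, \ldots, x_k)$ requires the full ``holomorphic curves axiom'' machinery invoked in \S\ref{sec:SW}, together with a Gromov-type compactness argument guaranteeing that the limiting broken configuration still meets every prescribed point. This is precisely the place where the parenthetical ``(which could perhaps be avoided with more work)'' at the top of \S\ref{sec:SW} is in force; everything else in the argument reduces to combinatorics of convex lattice paths and to earlier results of the paper.
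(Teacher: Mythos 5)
Your proposal is correct and follows essentially the same route as the paper, which packages the Seiberg--Witten input as a single lemma (Lemma~\ref{lem:swgr}): nonvanishing of $SW(X,\omega,A)$ together with $I(A)=2k$ forces, via Taubes's ``SW=Gr'' theorem plus a Gromov compactness argument for non-generic data, a $J$-holomorphic curve in class $A$ through the $k$ points for every $J$ and every configuration, giving the upper bounds for $A=d[\text{line}]$ and $A=(m,n)$ (your linear-system dimension count $(m+1)(n+1)-1$ is exactly $I(A)/2$), while the lower bounds come from embedding the ball, resp.\ the polydisk, exactly as you describe. The one thing to repair is the upper bound in (a): you cite \eqref{eqn:fs}, but in the paper that inequality is deferred from the proof of the Ball property to \S\ref{sec:SW} and is established precisely by part (a) of this theorem, so invoking it here is circular --- instead apply your part-(b) Seiberg--Witten argument to degree-$d$ curves in $\C P^2$, which is exactly what Lemma~\ref{lem:swgr} does.
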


To prepare for the proof of this theorem, if $(X,\omega)$ is a closed symplectic four-manifold with $b_2^+(X)=1$, and if $A\in H_2(X)$, let $SW(X,\omega,A)\in\Z/2$ denote the mod $2$ Seiberg-Witten invariant of $X$, for the spin-c structure determined by $\omega$ and $A$, in the symplectic chamber; see the review in \cite[\S2]{cw}. Define the ECH index
\[
I(A) = A\cdot A + \langle c_1(TX), A\rangle \in \Z.
\]

\begin{lemma}
\label{lem:swgr}
Let $(X,\omega)$ be a closed symplectic four-manifold\footnote{If $b_2^+(X)>1$ then the lemma is also true (now the Seiberg-Witten invariant does not depend on a choice of chamber), but vacuous, because in this case one of the corollaries of Taubes's ``Seiberg-Witten = Gromov'' theorem in \cite{swgr} is that $SW(X,\omega,A)\neq 0$ implies $I(A)=0$.} with $b_2^+(X)=1$ and let $A\in H_2(X)$.
If $SW(X,\omega,A)\neq 0$ and $I(A)=2k$, then $c_k(X,\omega) \le \langle[\omega],A\rangle$.
\end{lemma}

\begin{proof}
If $J\in\mc{J}(X,\omega)$ and $x_1,\ldots,x_k\in X$ are generic, then it follows from Taubes's ``Seiberg-Witten = Gromov'' theorem \cite{swgr} that there exists a $J$-holomorphic curve (possibly with disconnected domain) in the homology class $A$ passing through the points $x_1,\ldots,x_k$. Thus
\[
\inf_{u\in \mc{M}^J(X;x_1,\ldots,x_k)}\mc{E}(u) \le \langle [\omega],A\rangle
\]
when $J,x_1,\ldots,x_k$ are generic. A Gromov compactness argument shows that the supremum in the definition \eqref{eqn:maindef} of $c_k(X,\omega)$ is realized for generic $J,x_1,\ldots,x_k$.
\end{proof}

\begin{proof}[Proof of Theorem~\ref{thm:closed}.]
(a) Let $d$ be the integer in the statement of the theorem. Then by \eqref{eqn:balllb} and the Conformality, Monotonicity, and Increasing properties, we have $c_k(\C P^2(a)) \ge da$. On the other hand, by \eqref{eqn:fs} and the Conformality, Monotonicity, and Increasing properties, we have $c_k(\C P^2(a)) \le da$. The latter inequality also follows from Lemma~\ref{lem:swgr} and the Increasing property, because if $A\in H_2(\C P^2)$ is $d$ times the homology class of a line, then $I(A) = d^2+3d$, and as reviewed in \cite[\S2.4]{cw} we have $SW(A)\neq 0$.

(b) Let $L$ denote the right hand side of the equation in (b). If $m$ and $n$ are nonnegative integers, and if $A=(m,n)\in H_2(S^2\times S^2)$, then $I(A) = 2(mn+m+n)$. As reviewed in \cite[\S2.4]{cw}, we have $SW(A)\neq 0$. It follows from Lemma~\ref{lem:swgr} and the Increasing property that
\[
c_k(S^2(a)\times S^2(b)) \le L.
\]

To prove the reverse inequality, consider the polydisk
\begin{equation}
\label{eqn:polydisk}
P(a,b) = \left\{z\in\C^2 \;\big|\; \pi|z_1|^2\le a, \; \pi|z_2|^2\le b\right\}.
\end{equation}
A calculation using Theorem~\ref{thm:convex} shows that
\[
c_k(P(a,b)) = L.
\]
Since the interior of $P(a,b)$ symplectically embeds into $S^2(a)\times S^2(b)$, we are done by Monotonicity.
\end{proof}

\begin{remark}
\label{rem:closed}
Theorem~\ref{thm:closed} shows that the capacities $c_k$ are the same for $\C P^2(a)$ and the ball $B^4(a)$; and likewise they are the same for $S^2(a)\times S^2(b)$ and the polydisk $P(a,b)$. This means that if the capacities $c_k$ obstruct a symplectic embedding of a symplectic four-manifold $(X,\omega)$ into $B^4(a)$ or $P(a,b)$ respectively, then a symplectic embedding of $(X,\omega)$ into $\C P^2(a)$ or $S^2(a)\times S^2(b)$ respectively is not possible either. The same statement is true for the ECH capacities $c_k^{\op{ECH}}$ when $X$ is a star-shaped domain by \cite[Thm.\ 1.4]{cw}.
\end{remark}


\section{An even simpler definition of capacities}
\label{sec:simple}

To conclude, we now define an even simpler series of symplectic capacities, for symplectic manifolds of any dimension.

If $(X,\omega)$ is a symplectic manifold, let $\mc{J}(X,\omega)$ denote the set of $\omega$-compatible almost complex structures on $X$. Given $J\in\mc{J}(X,\omega)$, let $\mc{P}^J(X)$ denote the set of proper holomorphic maps
\[
u:(S,j) \longrightarrow (X,J)
\]
where $(S,j)$ is a one-dimensional complex manifold (not necessarily compact or connected), and we assume that the restriction of $u$ to each component of $S$ is nonconstant. Note that regarded as a two-dimensional real manifold, $S$ does not have boundary. Given $u$ as above, define the {\em energy\/}
\[
\mc{E}(u) = \int_Su^*\omega \in [0,\infty].
\]
Note that the energy is well-defined because $u^*\omega$ is pointwise nonnegative. If $x_1,\ldots,x_k\in X$ are distinct, let $\mc{P}^J(X;x_1,\ldots,x_k)$ denote the set of proper maps $u$ as above such that $x_1,\ldots,x_k\in u(S)$.

\begin{definition}
\label{def:simple}
Let $(X,\omega)$ be a compact symplectic manifold (possibly disconnected and/or with boundary), and let $k$ be a nonnegative integer. Define
\begin{equation}
\label{eqn:chat}
\widehat{c}_k(X,\omega) = \sup_{\substack{J\in\mc{J}(X,\omega)\\ \mbox{\scriptsize $x_1,\ldots,x_k\in \op{int}(X)$ distinct}}} \inf_{u\in\mc{P}^J(\op{int}(X);x_1,\ldots,x_k)} \mc{E}(u) \in [0,\infty].
\end{equation}
\end{definition}

\begin{remark}
It follows immediately from the definition that the capacities $\widehat{c}_k$ satisfy the Conformality, Increasing, Disjoint Union, and Sublinearity properties in Theorem~\ref{thm:properties}.

We can also quickly show that they satisfy Monotonicity under symplectic embeddings $\varphi:(X,\omega)\to(X',\omega')$ between symplectic manifolds of the same dimension, without using Gromov compactness. This is because since $X$ is compact, any $J\in\mc{J}(X,\omega)$ can be extended to $J'\in\mc{J}(X',\omega')$ with $J'|_{\varphi(X)} = \varphi_*J$.

One can now further deduce that each $\widehat{c}_k$ is a $C^0$-continuous function on the set of star-shaped domains in $\R^{2n}$.
\end{remark}

\begin{remark}
When $k=1$, the capacity $\widehat{c}_1$ is very similar\footnote{The only difference is that Gromov uses tame rather than compatible almost complex structures.} to the ``symplectic width'' defined by Gromov in \cite[\S4.1]{gromov}. In particular, $\widehat{c}_1(B^{2n}(a))=a$. The symplectic width should not be confused with the Gromov width $c_{\op{Gr}}$ from \S\ref{sec:intro}. The Monotonicity property of $\widehat{c}_1$ implies that $c_{\op{Gr}} \le \widehat{c}_1$.
\end{remark}

In a sense the capacities $\widehat{c}_k$ are more natural than the $c_k$, because for domains that are not Liouville domains, they are defined directly, without taking a supremum over symplectic embeddings as in Definition~\ref{def:sup}. However the price for this is that we have to consider holomorphic curves without nice boundary conditions, which makes computations more difficult.

\begin{remark}
Suppose that $\op{dim}(X)=4$. If $X$ is closed, then $\widehat{c}_k(X,\omega) = c_k(X,\omega)$ by definition. If $(X,\omega)$ is a Liouville domain, then we have
\begin{equation}
\label{eqn:chatcompare}
\widehat{c}_k(X,\omega) \le c_k(X,\omega).
\end{equation}
This is because if $\epsilon>0$, then any almost complex structure $J\in\mc{J}(X,\omega)$ can be extended to an $\overline{\omega}$-compatible almost complex structure on $\overline{X}$ whose restriction to $[\epsilon,\infty)\times Y$ agrees with an $e^\epsilon\lambda$-compatible almost complex structure on $\R\times Y$. It follows from this as in \eqref{eqn:shrinkingcapacity} that
\[
\widehat{c}_k(X,\omega) \le e^\epsilon c_k(X,\omega).
\]
We can choose $\epsilon>0$ arbitrarily small, and this proves \eqref{eqn:chatcompare}.

We conjecture that in fact $\widehat{c}_k(X,\omega) = c_k(X,\omega)$ when $(X,\omega)$ is a four-dimensional Liouville domain.
\end{remark}

\begin{example}
The simplest example of $\widehat{c}_k$ that we do not know how to compute is $\widehat{c}_3$ of a four-dimensional ball. We currently just know that
\[
\frac{3}{2} \le \widehat{c}_3(B^4(1)) \le 2.
\]
Here the first inequality holds because three copies of $\op{int}(B^4(1/2))$ can be symplectically embedded into $B^4(1)$, and the second inequality holds because $\op{int}(B^4(1))$ can be symplectically embedded into $\C P^2(1)$.
\end{example}

A number of additional capacities along the lines of the $\widehat{c}_k$, defined using proper holomorphic maps satisfying various constraints, are studied in \cite{mikhalkin}.


\end{document}